\documentclass[12pt]{article}

\usepackage[notref,notcite]{showkeys} 
\usepackage{amsmath}
\usepackage{amsthm} 
\usepackage{amsfonts}
\usepackage{latexsym}
\usepackage{amssymb}  
\usepackage{epsfig}

\marginparwidth 0pt
\oddsidemargin  0pt
\evensidemargin  0pt
\marginparsep 0pt

\topmargin   -.5in
\hoffset -0.1in
\textwidth   6.8in
\textheight  8.5 in

\usepackage{epsfig}

\usepackage{url}

\newcommand{\R}{\mathbb{R}}
\newcommand{\N}{\mathbb{N}}

\newcommand{\calP}{\mathcal{P}}

\DeclareMathOperator{\dom}{dom}

\DeclareMathOperator{\range}{R}
\DeclareMathOperator{\graph}{Gr}

\newcommand{\inner}[2]{\langle{#1},{#2}\rangle}

\newcommand{\tos}{\rightrightarrows} 

\newtheorem{theorem}{Theorem}[section]
\newtheorem{lemma}[theorem]{Lemma}
\newtheorem{corollary}[theorem]{Corollary}

\newtheorem{proposition}[theorem]{Proposition}

\newtheorem{definition}[theorem]{Definition}

\renewcommand{\hat}{\widehat}
\renewcommand{\tilde}{\widetilde}

\title{A non-type (D) operator in $c_0$}

\author{Orestes Bueno\footnote{Instituto de Mat\'ematica Pura e Aplicada 
(IMPA), Estrada Dona
Castorina 110, Rio de Janeiro, RJ, CEP 22460-320, Brazil,
{\tt obueno@impa.br}.
The work of this author was partially supported by CAPES}
\qquad B. F. Svaiter\footnote{Instituto de Mat\'ematica Pura e Aplicada, 
(IMPA), Estrada Dona
Castorina 110, Rio de Janeiro, RJ, CEP 22460-320, Brazil,
{\tt benar@impa.br}.
The work of this author was partially supported by
 CNPq grants no. 
474944/2010-7, 303583/2008-8 and  FAPERJ grant E-26/110.821/2008.
}}

\begin{document}

\maketitle


\begin{abstract}
  Previous examples of non-type (D) maximal monotone operators were
  restricted to $\ell^1$, $L^1$, and Banach spaces containing isometric
  copies of these spaces.  This fact led to the conjecture that
  non-type (D) operators were restricted to this class of Banach spaces.
  We present a linear non-type (D) operator in $c_0$.\\

keywords: maximal monotone, type (D), Banach space, extension, bidual. 
\end{abstract}

\section{Introduction}
Let $U$, $V$ arbitrary sets. A \emph{point-to-set} (or multivalued) operator
$T:U\tos V$ is a map $T:U\to \calP(V)$, where $\calP(V)$ is the power set
 of $V$. Given 
$T:U\tos V$, the \emph{graph} of $T$ is the set
\[
\graph(T):=\{(u,v)\in U\times V\:|\:v\in T(u)\},
\]
the  \emph{domain} and the \emph{range} of $T$ are, respectively,
\[
\dom(T):=\{u\in U\:|\:T(u)\neq\emptyset\},\qquad
\range(T):=\{v\in V\:|\:\exists u\in U, \;v\in T(u) \}
\]
and the \emph{inverse} of $T$ is the  point-to-set operator $T^{-1}:V\tos U$,
\[
 T^{-1}(v)=\{u\in U\:|\:v\in T(u)\}.
\]
A point-to-set operator $T:U\tos V$ is called \emph{point-to-point} if for
every $u\in\dom(T)$, $T(u)$ has only one element.
Trivially, a point-to-point operator is injective if, and only if,
its inverse is also point-to-point.

Let $X$ be a real Banach space.
We 
use the notation $X^*$ for the topological
dual of $X$.
From now on  $X$ is identified with its canonical injection into $X^{**}=(X^*)^*$
and the duality product in $X\times X^*$ will be denoted by  $\inner{\cdot}{\cdot}$,
\[
\inner{x}{x^*}=\inner{x^*}{x}=x^*(x),\qquad x\in X,x^*\in X^*.
\]
A point-to-set operator $T:X\tos X^*$ (respectively $T:X^{**}\tos X^*$) is \emph{monotone}, if
\[
\inner{x-y}{x^*-y^*}\geq 0,\quad \forall (x,x^*),(y,y^*)\in \graph(T),
\]
(resp. $\inner{x^*-y^*}{x^{**}-y^{**}}\geq 0$, $\forall (x^{**},x^*),(y^{**},y^*)\in \graph(T)$), and it is \emph{maximal monotone} if it is monotone and maximal in the family of monotone operators in $X\times X^*$ (resp. $X^{**}\times X^*$)
with respect to the order of inclusion of the graphs.

We denote $c_0$ as the space of real sequences converging to $0$ and $\ell^{\infty}$ as the space of real bounded sequences, both endowed with the $\sup$-norm
\[
\|(x_k)_k\|_{\infty}=\sup_{k\in\N} |x_k|,
\]
and $\ell^1$ as the space of absolutely summable real sequences, endowed with the $1$-norm,
\[
\|(x_k)_k\|_{1}=\sum_{k=1}^{\infty} |x_k|.
\]
The dual of $c_0$ is identified with  $\ell^1$ in the following sense: for $y\in\ell^1$
\[
y(x)=\inner{x}{y}=\sum_{i=1}^\infty x_iy_i, \qquad \forall
x\in c_0.
\]
Likewise, the dual of $\ell^1$ is identified with $\ell^\infty$. It is well known that $c_0$ (as well as $\ell^1$, $\ell^{\infty}$, etc.) is a non-reflexive Banach space.

Let $X$ be a \emph{non-reflexive} real Banach space and $T:X\tos X^*$ be
maximal monotone.
Since $X\subset X^{**}$, the point-to-set operator $T$ can
also be regarded as an operator from $X^{**}$ to $X^*$. We denote $\hat{T}:X^{**}\tos X^*$ as the operator such that
\[
\graph(\hat{T})=\graph(T).
\]
If $T:X\tos X^*$ is maximal monotone then $\hat{T}$ is (still) trivially monotone but, in general, not maximal monotone. 
%
%
Direct use of  the Zorn's Lemma shows that $\widehat{T}$ has a 
maximal monotone extension.
%
So it is natural to ask if such maximal monotone extension to the bidual is
unique. Gossez \cite{JPGos0,JPGos1,JPGos3,JPGos2} gave a sufficient condition for uniqueness of
such an extension.

\begin{definition}[\cite{JPGos0}]
  Gossez's \emph{monotone closure} (with respect to $X^{**}\times X^*$) 
of a maximal monotone operator $T:X\tos X^*$, is 
the point-to-set operator $\widetilde T:X^{**}\tos X^*$ whose graph
$\graph\left(\widetilde T\right)$ is
given by
\[
\graph\left(\widetilde T\right) = \{(x^{**},x^*)\in X^{**}\times X^*\:|\:\inner{x^*-y^*}{x^{**}-y}\geq 0,\,\forall (y, y^*) \in T\}. 
\]
A maximal monotone operator $T : X\tos X^* $, is of \emph{Gossez type (D)} if for any
$(x^{**},x^*) \in \graph\left( \widetilde T \right)$ , there exists a
bounded net $\bigg((x_i, x^*_i)\bigg)_{i\in I}$ in $\graph(T)$
which converges to $(x^{**}, x^*)$ in the $\sigma(X^{**},
X^*)\times$strong topology of $X^{**}\times X^*$.
\end{definition}

Gossez proved~\cite{JPGos2} that a maximal monotone operator $T:X\tos X^*$ of
of type (D) has unique maximal monotone extension to the 
bidual, namely, its Gossez's monotone closure $\widetilde T:X^{**}\tos X^*$.
Beside this fact, maximal monotone operators of type (D) share 
many properties with maximal monotone operators defined in \emph{reflexive} Banach
spaces as, for example, convexity of the closure of the domain and convexity
of the closure of the range \cite{JPGos0}.

Gossez gave an example of a non-type (D) operator on $\ell^1$~\cite{JPGos1}.
Later, Fitzpatrick and Phelps gave an example of a non-type (D) on
$L^1[0,1]$~\cite{FitPhe95}.
%
%
These examples led  Professor J. M. Borwein to
define  Banach spaces of type (D) as those 
Banach spaces where every maximal monotone operator is of type (D), and
to formulate the following most interesting conjecture~\cite[\S 4, question 3]{Bor10}:
\begin{quotation}
  \noindent $\bullet$
Are any nonreflexive spaces $X$ of type (D)? That is, are there nonreflexive spaces
on which all maximal monotones on $X$ are type (D). I conjecture `weakly' that if
$X$ contains no copy of $\ell^1(\N)$   then $X$ is type (D) as would hold in $X = c_0$.
\end{quotation}
In this work, we answer negatively such
conjecture by giving an example of a non-type (D) operator on $c_0$ and proving
that, for every space which contains a isometric copy of $c_0$, a non-type (D)
operator can be defined.

\section{A non-type (D) operator on $c_0$}

Gossez's operator~\cite{JPGos1} $G:\ell^1\to\ell^\infty$ is defined as
\begin{equation}
  \label{eq:G}
G(x)=y, \qquad y_n=\sum_{i=n+1}^{\infty}x_i-
\sum_{i=1}^{n-1}x_i,
\end{equation}
which is linear,  continuous, anti-symmetric and,
therefore, maximal monotone. 
Gossez's operator is also injective
(see the proof of~\cite[Proposition 3.2]{SvaBue}).
This operator will be used to define a non-type
(D) maximal monotone operator in $c_0$. 

\begin{lemma}\label{lem:1}
  The operator
  \begin{equation}
    \label{eq:op}
    T:c_0\tos\ell^1,\qquad
    T(x)=\{ y\in \ell^1\;|\; -G(y)=x\}
  \end{equation}
  is point-to-point in its domain, 
  is maximal monotone and its range is
  \begin{equation}
    \label{eq:range}
  R(T)=\left\{y\in\ell^1\;\left|\; \sum_{i=1}^{\infty} y_i=0\right.\right\}
  \;.
  \end{equation}
\end{lemma}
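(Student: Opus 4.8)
The plan is to build the operator $T$ directly from Gossez's operator $G:\ell^1\to\ell^\infty$ and transfer its known properties. Recall that $G$ is linear, continuous, antisymmetric, maximal monotone, and injective; crucially $-G(y)=x$ forces $x$ to lie in $c_0$ (not merely $\ell^\infty$), since $y\in\ell^1$ makes the tail and head sums $\sum_{i>n}y_i-\sum_{i<n}y_i$ tend to $-\sum_i y_i$... so I would first verify that $G(\ell^1)\subseteq c_0$, which is what makes the definition \eqref{eq:op} sensible as an operator \emph{into} $c_0$'s predual picture. Then $T=(-G)^{-1}$ viewed as a map $c_0\tos\ell^1$, and $\dom(T)=\range(-G)=-G(\ell^1)$.

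\textbf{Point-to-point.} Since $G$ is injective and linear, $-G$ is injective; hence for each $x\in\dom(T)$ the set $T(x)=\{y:-G(y)=x\}$ is a singleton. This is immediate from the remark in the introduction that a point-to-point operator is injective iff its inverse is point-to-point, applied to $-G$.

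\textbf{Monotonicity and maximality.} For monotonicity: given $(x,y),(x',y')\in\graph(T)$ we have $x=-G(y)$, $x'=-G(y')$, so $\inner{x-x'}{y-y'}=\inner{-G(y-y')}{y-y'}=-\inner{G(y-y')}{y-y'}$; since $G$ is antisymmetric (hence $\inner{Gz}{z}=0$ for all $z$, or at least $\ge 0$ giving the monotone sign after negation), this quantity is $\geq 0$. Wait — more carefully: $G$ maximal monotone means $\inner{Gz}{z}\ge 0$; antisymmetry gives $\inner{Gz}{z}=0$, so $T$ is monotone (indeed antisymmetric). For maximality, I would argue that the graph of $T$ is, up to the flip $(x,y)\mapsto(y,x)$, the graph of $-G$ regarded as an operator from (a subspace of) $c_0^{**}=\ell^\infty$-ish... no: cleaner to say $\graph(T)^{-1}=\graph(-G)$ and invoke that the inverse of a maximal monotone operator is maximal monotone, \emph{provided} we are in the right duality. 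The subtlety: $-G:\ell^1\to\ell^\infty=(\ell^1)^*$ is maximal monotone on $\ell^1$, so $(-G)^{-1}:\ell^\infty\tos\ell^1$ is maximal monotone as an operator on $\ell^\infty=(\ell^1)^*$, but we want maximality of $T$ as an operator $c_0\tos\ell^1=c_0^*$, a \emph{different} duality pairing. So I would instead prove maximality of $T$ from scratch: suppose $(\bar x,\bar y)\in c_0\times\ell^1$ is monotonically related to all of $\graph(T)$; for every $y\in\ell^1$ we get $\inner{\bar x-(-G(y))}{\bar y-y}\ge 0$, i.e. $\inner{\bar x+G(y)}{\bar y-y}\ge 0$. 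Setting $y=\bar y + t z$ for $z\in\ell^1$, $t\in\R$ and letting $t\to 0^{\pm}$ yields $\inner{\bar x+G(\bar y)}{z}=0$ for all $z\in\ell^1$ with $Gz\in c_0$ — and since $G(\ell^1)\subseteq c_0$ this holds for all $z\in\ell^1$, forcing $\bar x+G(\bar y)=0$ in $\ell^\infty$, i.e. $\bar x=-G(\bar y)$, so $(\bar x,\bar y)\in\graph(T)$. This linearization trick is the natural route and I expect it to be the technical heart of the argument; the only care needed is checking that the pairings make sense (the vectors $\bar x+G(y)$ lie in $c_0$, so pairing against $\ell^1$ elements is fine).

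\textbf{Range.} Here $\range(T)=\dom(-G)=\ell^1$ as a set of inputs, but the \emph{range} of $T$ means $\{y\in\ell^1: y\in T(x)\text{ for some }x\}=\{y\in\ell^1: -G(y)\in c_0\}$. I claimed $G(\ell^1)\subseteq c_0$ always, so this would be all of $\ell^1$ — but the stated answer \eqref{eq:range} is the hyperplane $\{\sum y_i=0\}$, so in fact $G(y)\in c_0$ \emph{iff} $\sum_i y_i=0$. Indeed, from \eqref{eq:G}, $(G(y))_n=\sum_{i>n}y_i-\sum_{i<n}y_i$; as $n\to\infty$, $\sum_{i>n}y_i\to 0$ and $\sum_{i<n}y_i\to\sum_i y_i$, so $(G(y))_n\to -\sum_i y_i$. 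Thus $G(y)\in c_0\iff \sum_i y_i=0$, giving exactly \eqref{eq:range}. I would present this limit computation explicitly, as it is short and pins down the range; it also retroactively corrects the sketch above ($\dom(T)$ is $-G$ of the hyperplane, not of all $\ell^1$). The main obstacle is none of the individual steps but getting the duality bookkeeping right in the maximality proof — making sure the separating/linearization argument is carried out in $c_0\times\ell^1$ with the correct pairing and that all the vectors that appear genuinely lie in $c_0$.
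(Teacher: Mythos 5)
Your treatment of the point-to-point property, of monotonicity, and of the range is correct and essentially identical to the paper's (injectivity of $G$, antisymmetry giving $\inner{G(z)}{z}=0$, and the limit $(G(y))_n\to-\sum_i y_i$). The problem is the maximality argument, which contains a genuine gap.

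Your linearization is built on the premise that $(-G(y),y)\in\graph(T)$ for \emph{every} $y\in\ell^1$, i.e.\ that $G(\ell^1)\subseteq c_0$. As your own range computation shows, this is false: $(-G(y),y)$ belongs to $\graph(T)$ only when $\sum_i y_i=0$, so the admissible test points form the hyperplane $H=\{y\in\ell^1:\sum_i y_i=0\}$. You note this correction at the end but never propagate it back into the maximality proof, and it breaks that proof in two places. First, the substitution $y=\bar y+tz$ requires $\bar y+tz\in H$; if $\bar y\notin H$ and $z\in H$ there are \emph{no} valid $t$, and if $z\notin H$ then $t$ is pinned to a single value, so you cannot let $t\to 0^\pm$. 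Establishing that $\bar y\in H$ is precisely the delicate point: the Gossez closure $\widetilde T$ does contain points $(x^{**},y)$ with $\sum_i y_i\neq 0$ (this is how Proposition~\ref{pro:nond} works), and they are excluded only because their first coordinates lie in $\ell^\infty\setminus c_0$. So the hypothesis $\bar x\in c_0$ must be used to force $\sum_i\bar y_i=0$, and your argument never does this; the paper does it by first deriving $\bar x_i=-[G(\bar y)_i+\sum_k\bar y_k]$ (using $\bar x\in c_0$ to pass to the limit in a telescoped identity) and then testing against $(x',y')=(0,0)$ to get $-(\sum_k\bar y_k)^2\geq 0$. Second, even granting $\bar y\in H$, varying $z$ only over $H$ yields $\inner{\bar x+G(\bar y)}{z}=0$ for $z\in H$ only, which gives $\bar x+G(\bar y)\in H^{\perp}=\R\,e$ with $e=(1,1,\dots)$, not $\bar x+G(\bar y)=0$; one more step (e.g.\ that both $\bar x$ and $G(\bar y)$ tend to $0$ when $\bar y\in H$) is needed to kill the constant. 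The paper's route — testing against the specific graph points $(-\lambda v^m,\lambda u^m)$ with $u^m\in H$, telescoping, and then using the origin — is essentially your linearization carried out correctly on the hyperplane, plus the missing argument that $\bar y\in H$.
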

\begin{proof}
  Since $G$ is injective, $T$ is point-to-point in its domain. Moreover, direct use
  of \eqref{eq:G} shows that $G$ is linear and $\inner{y}{G(y)}=0$ for any $y\in\ell^1$,
  which proves that $T$ is also linear and monotone.
  
  Using \eqref{eq:G} we conclude that for any $y\in\ell^1$,
  \[
  \lim_{n\to\infty}(G(y))_i=-\sum_{i=1}^\infty y_i
  \]
  which proves \eqref{eq:range}.

  Suppose that $x\in c_0$, $y\in \ell^1$ and 
  \begin{equation}
    \label{eq:mon}
  \inner{x-x'}{y-y'}\geq 0,\qquad \forall (x',y')\in\graph(T).
\end{equation}
Define, $u^1=(-1,1,0,0,\dots)$, $u^2=(0,-1,1,0,0),\dots$, that is
\begin{equation}
  \label{eq:um}
(u^m)_i=
\begin{cases} -1,& i=m\\
  1,& i=m+1\\
  0,&\mbox{otherwise}
\end{cases}
\qquad i=1,2,\dots
\end{equation}
and let 
\begin{equation}
  \label{eq:vm}
v^m=G(u^m), \qquad (v^m)_i=
\begin{cases} 1,& i=m \mbox{ or }i=m+1\\
  0,&\mbox{otherwise}
\end{cases}
\qquad i=1,2,\dots
\end{equation}
where the expression of $(v^m)_i$ follows from \eqref{eq:G} and \eqref{eq:um}.

  Direct use of  \eqref{eq:um}, \eqref{eq:vm} and \eqref{eq:op} shows that
  $T(-\lambda v^m)=\lambda u^m$ for $\lambda\in\R$ and $m=1,2,\dots$. 
  Therefore, for any $\lambda\in \R$, $m=1,2,\dots$
  \[
  \inner{x+\lambda v_m}{y-\lambda u^m}\geq 0
  \]
  which is equivalent to
  \[
  \inner{x}{y} +\lambda [ \inner{ v_m}{y}
    -\inner{x}{u^m}]\geq 0.
  \]
  Since the above inequality holds for any $\lambda$, it holds that  
  \[
\inner{x}{u^m}=\inner{v^m}{y},\qquad
  m=1,2,\dots
  \]
  which, in view of \eqref{eq:um}, \eqref{eq:vm} is equivalent to
  \begin{equation}
    x_{m+1}-x_m= y_{m+1}+y_m, \qquad m=1,2,\dots
    \label{eq:ufa}
  \end{equation}
  Adding the above equality for $m=i,i+1,\dots,j$ we conclude that
  \[
  x_{j+1}=x_i+y_i+2\sum_{k=i+1}^j y_k+y_{j+1},\qquad i<j.
  \]
  Using the assumptions $x\in c_0$, $y\in\ell^1$
  and taking the limit $j\to\infty$ in the
  above equation, we conclude that
  \[
  x_i=
  -\left[y_i+2\sum_{k=i+1}^\infty y_k\right]
  = -\left[ G(y)_i+\sum_{k=1}^\infty y_k\right].
  \]
  Also, using the above relation between $x$ and $y$ in \eqref{eq:mon} with
  $x'=0$, $y'=0$ we obtain
  \[
  -\left[
  \sum_{k=1}^\infty y_k \right]^2\geq0.
  \]
  Combining the two above equations we conclude that $x=-G(y)$. Hence 
  $(x,y)\in \graph(T)$, which proves
  the maximal monotonicity of $T$.
\end{proof}

\begin{proposition}\label{pro:nond}
  The operator $T:c_0\tos\ell^1$ defined in Lemma~\ref{lem:1} has infinitely many maximal
  monotone extensions to $\ell^\infty\tos \ell^1$. In particular, $T$ is non-type (D).
\end{proposition}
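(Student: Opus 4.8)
The plan is to exhibit an explicit infinite family of pairwise distinct maximal monotone extensions of $T$, viewed as $\hat T:\ell^\infty\tos\ell^1$, and then to invoke Gossez's theorem that a type (D) operator has a \emph{unique} maximal monotone extension to its bidual; since $c_0^{**}=\ell^\infty$, exhibiting infinitely many distinct such extensions immediately forces non-type (D).

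First I would compute Gossez's monotone closure $\widetilde T:\ell^\infty\tos\ell^1$. By Lemma~\ref{lem:1}, $\graph(T)=\{(-G(y),y):y\in\ell^1\}$, where $-G(y)\in c_0$ forces $\sum_i y_i=0$. Hence $(x^{**},x^*)\in\graph(\widetilde T)$ iff $\inner{x^*-y}{x^{**}+G(y)}\geq0$ for every $y\in\ell^1$ with $\sum_i y_i=0$. Expanding this, using the anti-symmetry of $G$ and the skew identity $\inner{y}{G(y)}=0$ (both already established in the proof of Lemma~\ref{lem:1}), and using that such $y$ form a linear subspace, one is forced to have $x^{**}+G(x^*)$ annihilate $\{y\in\ell^1:\sum_i y_i=0\}$, hence $x^{**}=c\,\mathbf 1-G(x^*)$ for a scalar $c$, where $\mathbf 1=(1,1,\dots)\in\ell^\infty$; the leftover condition then reads $\inner{x^*}{x^{**}}=c\sum_i x_i^*\geq0$. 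This gives
\[
\graph(\widetilde T)=\Big\{\big(c\,\mathbf 1-G(x^*),\,x^*\big)\ :\ x^*\in\ell^1,\ c\in\R,\ c\sum\nolimits_i x_i^*\geq0\Big\}.
\]

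Next, for each real $t\geq0$ I would set $M_t:=\big\{\big(t(\sum_i x_i^*)\mathbf 1-G(x^*),\,x^*\big):x^*\in\ell^1\big\}\subseteq\ell^\infty\times\ell^1$. Each $M_t$ is a linear subspace; it contains $\graph(\hat T)$ because $-G(y)\in c_0$ forces $\sum_i y_i=0$, whence $(-G(y),y)\in M_t$; and it is monotone because on this subspace $\inner{t(\sum_i x_i^*)\mathbf 1-G(x^*)}{x^*}=t(\sum_i x_i^*)^2\geq0$. For maximality, suppose $(x^{**},x^*)$ is monotonically related to all of $M_t$; since $\graph(\hat T)\subseteq M_t$, the formula for $\widetilde T$ gives $x^{**}=c\,\mathbf 1-G(x^*)$. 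Testing monotone relatedness against the generic element $(t(\sum_i z_i)\mathbf 1-G(z),z)$ of $M_t$ (with $z\in\ell^1$ arbitrary), and again using $\inner{y}{G(y)}=0$, the monotonicity inequality collapses to $(c-t\sum_i z_i)(\sum_i x_i^*-\sum_i z_i)\geq0$; since $z\mapsto\sum_i z_i$ is onto $\R$, this says $(c-ts)(\sigma-s)\geq0$ for all $s\in\R$, where $\sigma=\sum_i x_i^*$, and the discriminant of this quadratic in $s$ forces $c=t\sigma$ (and $c=0$ when $t=0$), i.e.\ $(x^{**},x^*)\in M_t$. Finally, the $M_t$ are pairwise distinct: for $x^*=(1,0,0,\dots)$ the pair $(t\,\mathbf 1-G(x^*),x^*)$ lies in $M_t$ but in no $M_{t'}$ with $t'\neq t$. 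Thus $\hat T$ has infinitely many maximal monotone extensions to $\ell^\infty=c_0^{**}$, so $T$ cannot be of type (D) by the quoted theorem of Gossez.

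I expect the identification of $\widetilde T$ to be the one genuinely delicate step: one must manipulate the defining inequality via the anti-symmetry of $G$, and recognize that a bounded sequence killing the hyperplane $\{y\in\ell^1:\sum_i y_i=0\}$ must be a scalar multiple of $\mathbf 1$. After that, the monotonicity, maximality and distinctness of the $M_t$ all reduce to the elementary one-variable inequality $(c-ts)(\sigma-s)\geq0$, with nothing subtle remaining.
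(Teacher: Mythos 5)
Your proof is correct, but it takes a genuinely more constructive route than the paper's. The paper never computes $\widetilde T$ in full and never exhibits the maximal extensions explicitly: it verifies the single identity $\inner{-G(y)+\alpha e-x'}{y-y'}=\alpha\inner{y}{e}$ for all $(x',y')\in \graph(T)$ (using $\range(T)\subseteq\{y:\sum_i y_i=0\}$ and the skew-symmetry of $G$), uses it to place the points $\left(-G(\tau\tilde y)+\tfrac{1}{\tau}e,\ \tau\tilde y\right)$, $\tau>0$, with $\inner{\tilde y}{e}>0$, into $\graph(\widetilde T)$, invokes Zorn's lemma to obtain a maximal monotone extension $T_\tau$ through each such point, and then notes that $\inner{x^\tau-x^{\tau'}}{\tau\tilde y-\tau'\tilde y}=-(\tau-\tau')^2\inner{\tilde y}{e}/(\tau\tau')<0$, so that no monotone set contains two of these points and the $T_\tau$ are pairwise distinct. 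You replace the Zorn step by the explicit family $M_t$, at the price of two extra pieces of work: the complete identification of $\graph(\widetilde T)$ (where the annihilator argument forcing $x^{**}+G(x^*)\in\R\,\mathbf 1$ is indeed the delicate point, and is correct since the annihilator in $\ell^\infty$ of the kernel of the functional $e$ on $\ell^1$ is $\R\,\mathbf 1$), and the direct maximality proof of each $M_t$ via the inequality $(c-ts)(\sigma-s)\geq 0$ for all $s$, whose discriminant $(c-t\sigma)^2\leq 0$ correctly forces $c=t\sigma$. All of these steps check out, including the restriction to $t\geq 0$ needed for monotonicity of $M_t$. What your approach buys is a full description of the Gossez closure and concrete, Zorn-free maximal extensions; what the paper's buys is brevity, since two points of $\widetilde T$ that are not monotonically related to each other already suffice. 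Both arguments conclude identically by Gossez's theorem that a type (D) operator has a unique maximal monotone extension to the bidual.
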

\begin{proof}
  Let
  \[
  e=(1,1,1,\dots)
  \]
  We claim that 
  \begin{equation}
    \label{eq:mp}
  \inner{-G(y)+\alpha e-x'}{y-y'}=\alpha\inner{y}{e},\qquad
  \forall (x',y')\in \graph(T), y\in \ell^1.
  \end{equation}
  To prove this claim, first
  use \eqref{eq:op} and \eqref{eq:G} to conclude that $x'=-G(y')$ and
  \[
  \inner{-G(y)-x'}{y-y'}=\inner{G(y'-y)}{y-y'}=0
  \]
  As  $y'\in R(T)$,  using \eqref{eq:range} we have 
  $\inner{e}{y'}=0$, which combined with the above equation yields \eqref{eq:mp}.
  
  Take $\tilde y\in\ell^1$ such that $\inner{\tilde y}{e}>0$ and define
  \[ 
  x^\tau=-G(\tau\tilde y)+\frac{1}{\tau}  e, \qquad 0<\tau<\infty.
  \]
  In view of \eqref{eq:mp},
  \[
  \left (\, x^\tau,\tau\tilde y\, \right)\in  \widetilde T,
   \qquad 0<\tau<\infty.
  \]
  Therefore, for each $\tau \in (0,\infty)$ there exists a
  maximal monotone extension $T_\tau:\ell^\infty\tos\ell^1$  of $T$
  such that
  \[ (x^\tau,\tau \tilde y)\in G(T_\tau).
  \]
  However, these extensions are distinct because if $\tau,\tau'\in (0,\infty)$ and
  $\tau\neq\tau'$ then
  \[
  \inner{x^\tau-x^{\tau'}}{\tau \tilde y-\tau'\tilde y}=(\tau-\tau')(1/\tau-1/\tau')
  \inner{\tilde y}{e}<0.
  \]
\end{proof}

\begin{theorem}\label{teo:nond}
Let $X$ be a Banach space such that there exists a non-type (D) maximal monotone operator $T:X\tos X^*$, and let $\Omega$ be another Banach space which contains an isometric copy of $X$. Then there exists a non-type (D) maximal monotone operator $S:\Omega\tos\Omega^*$.
\end{theorem}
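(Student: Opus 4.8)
The plan is to push the operator $T$ forward along the isometric embedding and show that being non-type (D) is preserved. Fix a linear isometry $\iota\colon X\to\Omega$ onto the closed subspace $Y:=\iota(X)$. Two elementary facts will be used throughout: by Hahn--Banach the adjoint $\iota^*\colon\Omega^*\to X^*$ is onto (any $x^*\in X^*$, regarded as a functional on $Y$, extends to $\Omega^*$), and the bidual map $\iota^{**}\colon X^{**}\to\Omega^{**}$ is again a linear isometry whose restriction to $X\subset X^{**}$ equals $\iota$ (with the canonical identifications $X\subset X^{**}$ and $\Omega\subset\Omega^{**}$). The candidate operator is $S\colon\Omega\tos\Omega^*$ with
\[
\graph(S)=\bigl\{(\iota\zeta,z^*)\in\Omega\times\Omega^*\;:\;\iota^*z^*\in T(\zeta)\bigr\};
\]
then $\dom(S)=\iota(\dom T)$ and $S(\iota\zeta)=(\iota^*)^{-1}\bigl(T(\zeta)\bigr)$ is invariant under translation by the annihilator $Y^{\perp}=\ker\iota^*$.

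The first thing to check is that $S$ is maximal monotone. Monotonicity is immediate, since $\inner{\iota\zeta-\iota\eta}{z^*-w^*}=\inner{\zeta-\eta}{\iota^*z^*-\iota^*w^*}\ge0$ whenever $(\iota\zeta,z^*),(\iota\eta,w^*)\in\graph(S)$. For maximality, let $(z,z^*)\in\Omega\times\Omega^*$ be monotonically related to $\graph(S)$, and fix one $(\iota\zeta_0,z_0^*)\in\graph(S)$ (it exists because $\dom T\ne\emptyset$). Since $z_0^*+tw^*\in S(\iota\zeta_0)$ for all $w^*\in Y^{\perp}$ and $t\in\R$, testing monotonicity against these pairs forces $\inner{z-\iota\zeta_0}{w^*}=0$; as $\inner{\iota\zeta_0}{w^*}=0$, we get $\inner{z}{w^*}=0$ for every $w^*\in Y^{\perp}$, hence $z\in{}^{\perp}(Y^{\perp})=Y$ (Hahn--Banach, $Y$ being closed), say $z=\iota\zeta$. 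Because $\iota^*$ is onto, the pair $(\eta,\iota^*w^*)$ runs over all of $\graph(T)$ as $(\iota\eta,w^*)$ runs over $\graph(S)$, so $\inner{\zeta-\eta}{\iota^*z^*-\iota^*w^*}\ge0$ for every $(\eta,\iota^*w^*)\in\graph(T)$; maximality of $T$ then gives $\iota^*z^*\in T(\zeta)$, i.e. $(z,z^*)\in\graph(S)$.

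It remains to transfer the non-type (D) property. I would first observe that if $(x^{**},x^*)\in\graph(\widetilde T)$ and $z^*\in\Omega^*$ satisfies $\iota^*z^*=x^*$, then $(\iota^{**}x^{**},z^*)\in\graph(\widetilde S)$: indeed, for every $(\iota\eta,w^*)\in\graph(S)$, using $\iota\eta=\iota^{**}\eta$ and adjointness,
\[
\inner{z^*-w^*}{\iota^{**}x^{**}-\iota\eta}=\inner{\iota^*z^*-\iota^*w^*}{x^{**}-\eta}=\inner{x^*-\iota^*w^*}{x^{**}-\eta}\ge0,
\]
since $\iota^*w^*\in T(\eta)$ and $(x^{**},x^*)\in\graph(\widetilde T)$. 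Now suppose, for contradiction, that $S$ is type (D). As $T$ is non-type (D), choose $(x^{**},x^*)\in\graph(\widetilde T)$ to which no bounded net of $\graph(T)$ converges in the $\sigma(X^{**},X^*)\times$strong topology, and pick $z^*\in\Omega^*$ with $\iota^*z^*=x^*$; then $(\iota^{**}x^{**},z^*)\in\graph(\widetilde S)$, so there is a bounded net $\bigl((\iota\zeta_i,z^*_i)\bigr)_{i\in I}$ in $\graph(S)$ with $\iota\zeta_i\to\iota^{**}x^{**}$ in $\sigma(\Omega^{**},\Omega^*)$ and $z^*_i\to z^*$ in norm. Then $(\zeta_i,\iota^*z^*_i)\in\graph(T)$, the net $(\zeta_i)$ is bounded since $\norm{\zeta_i}=\norm{\iota\zeta_i}$, $\norm{\iota^*z^*_i-x^*}\le\norm{z^*_i-z^*}\to0$, and for every $\xi^*\in X^*$, picking $u^*\in\Omega^*$ with $\iota^*u^*=\xi^*$ yields $\inner{\zeta_i}{\xi^*}=\inner{\iota\zeta_i}{u^*}\to\inner{\iota^{**}x^{**}}{u^*}=\inner{x^{**}}{\xi^*}$, so $\zeta_i\to x^{**}$ in $\sigma(X^{**},X^*)$. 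Thus $\bigl((\zeta_i,\iota^*z^*_i)\bigr)_{i\in I}$ is a bounded net in $\graph(T)$ converging to $(x^{**},x^*)$ in the excluded topology --- the desired contradiction. Hence $S$ is non-type (D).

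The step I expect to be the real work is the maximality of $S$ --- the annihilator-translation argument combined with the surjectivity of $\iota^*$ --- while the transfer of Gossez's closure and of the approximating net should reduce to a routine, if notation-heavy, diagram chase through $\iota^*$ and $\iota^{**}$.
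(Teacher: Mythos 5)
Your proposal is correct. The operator $S$ you construct is exactly the paper's (your $\iota^*z^*\in T(\zeta)$ is the paper's $w^*|_X\in T(w)$ after identifying $X$ with $\iota(X)$), and your maximality argument --- translating the second coordinate by elements of the annihilator $Y^\perp$ to force $z\in Y$, then invoking surjectivity of $\iota^*$ and maximality of $T$ --- is the same as the paper's, phrased with ${}^{\perp}(Y^{\perp})=Y$ instead of a single Hahn--Banach functional with $\inner{z}{v^*}=1$. Where you genuinely diverge is the transfer of the non-type (D) property. The paper invokes the Marques Alves--Svaiter characterization of type (D) by the ``negative infimum'' inequality
\[
\sup_{(y,y^*)\in \graph(T)}\inner{y}{x^*-y^*}+\inner{y^*}{x^{**}}\geq\inner{x^*}{x^{**}},
\]
so that non-type (D) is witnessed by a single pair $(x^*,x^{**})$ violating it, and the violation is pushed forward to $S$ by a two-line computation. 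You instead work directly from Gossez's definition: you verify that $(\iota^{**}x^{**},z^*)\in\graph(\widetilde S)$ whenever $(x^{**},x^*)\in\graph(\widetilde T)$ and $\iota^*z^*=x^*$, and then pull any bounded approximating net for this point back through $\iota^*$ and $\iota^{**}$ to produce one for $(x^{**},x^*)$ in $\graph(T)$. Your route is more self-contained (it does not rely on the equivalence of type (D) with the NI condition, a nontrivial cited theorem) at the cost of the net-chasing; the paper's route is shorter but outsources the real content to the reference. Both the claim $(\iota^{**}x^{**},z^*)\in\graph(\widetilde S)$ and the convergence $\zeta_i\to x^{**}$ in $\sigma(X^{**},X^*)$ check out, using $\iota^{**}|_X=\iota$ and the surjectivity of $\iota^*$, so there is no gap.
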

\begin{proof}
We can identify $X$ as a closed subspace of $\Omega$. Define $S:\Omega\tos\Omega^*$ as
\[
(w,w^*)\in\graph(S)\quad\Longleftrightarrow\quad w\in X\text{ and }(w,w^*|_X)\in \graph(T).
\]
This implies that for every $v^*\in\Omega^*$ such that $v^*|_X\equiv 0$ then 
\begin{equation}\label{eq:nrm}
(w,w^*+t v^*)\in\graph(S),\qquad\forall \,(w,w^*)\in\graph(S),\,t\in\R,
\end{equation}

Take $(w,w^*),(z,z^*)\in \graph(S)$, then $w,z\in X$ and  $w-z\in X$. Thus, since $T$ is monotone and $(w,w^*|_X),(z,z^*|_X)\in \graph(T)$,
\[
\inner{w-z}{w^*-z^*}=\inner{w-z}{w^*|_X-z^*|_X}\geq 0.
\]

Now we prove that $S$ is maximal monotone. Let $(z,z^*)\in\Omega\times\Omega^*$ such that
\begin{equation}\label{eq:mnrl}
\inner{w-z}{w^*-z^*}\geq 0,\qquad\forall \,(w,w^*)\in\graph(S), 
\end{equation}
and assume that $z\notin X$. Then, by the Hahn-Banach Theorem, there exists $v^*\in\Omega^*$ such that $v|_X\equiv 0$ and $\inner{z}{v^*}=1$.
Using~\eqref{eq:nrm}, we obtain
\[
\inner{w-z}{w^*+tv^*-z^*}\geq 0,\qquad\forall \,(w,w^*)\in\graph(S),\,t\in\R,
\]
which is equivalent to
\[
\inner{w-z}{w^*-z^*}\geq t\inner{z-w}{v^*}=t,\qquad\forall \,(w,w^*)\in\graph(S),\,t\in\R,
\]
which leads to a contradiction if we let $t\to+\infty$. Hence $z\in X$. Now, using~\eqref{eq:mnrl} and the maximal monotonicity of $T$, we conclude that $(z,z^*|_X)\in\graph(T)$. Therefore $(z,z^*)\in \graph(S)$ and $S$ is maximal monotone.

Finally, we use \cite[eq.\ (5) and Theorem 4.4, item 2]{BSMMA-TypeD} to prove that $S$ is non-type (D). As $T$ is non-type (D) on $X\times X^*$ then there exists $(x^*,x^{**})\in X^*\times X^{**}$ such that
\[
\sup_{(y,y^*)\in \graph(T)}\inner{y}{x^*-y^*}+\inner{y^*}{x^{**}}<\inner{x^*}{x^{**}}.
\]
Define $z^{**}:X^*\to\R$ as $\inner{z^*}{z^{**}}=\inner{z^*|_X}{x^{**}}$ and let $z^*$ be any continuous extension of $x^*$ from $X$ to $\Omega$. Take $(w,w^*)\in\graph(S)$, then $w\in X$, $(w,w^*|_X)\in\graph(T)$ and, hence,
\begin{gather*}
\inner{w}{z^*-w^*}=\inner{w}{z^*|_X-w^*|_X}=\inner{w}{x^*-w^*|_X},\\
\inner{w^*}{z^{**}}=\inner{w^*|_X}{x^{**}},\qquad \inner{z^*}{z^{**}}=\inner{z^*|_X}{x^{**}}=\inner{x^*}{x^{**}}.
\end{gather*}
Therefore,
\[
\sup_{(w,w^*)\in \graph(S)}\inner{w}{z^*}+\inner{w^*}{z^{**}}-\inner{w}{w^*}<\inner{z^*}{z^{**}},
\]
and $S$ is non-type (D) on $\Omega\times\Omega^*$.
\end{proof}

Using Proposition~\ref{pro:nond} and Theorem~\ref{teo:nond}, we have the following Corollary.
\begin{corollary}
Any real Banach space $X$ which contains an isometric copy of $c_0$ has a non-type (D) maximal monotone operator.
\end{corollary}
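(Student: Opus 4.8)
The plan is simply to chain Proposition~\ref{pro:nond} and Theorem~\ref{teo:nond}. First I would recall that Proposition~\ref{pro:nond} produces an explicit non-type (D) maximal monotone operator $T:c_0\tos\ell^1$, where $\ell^1$ is the canonical (isometric) realisation of $(c_0)^*$; thus $c_0$ is itself a Banach space on which a non-type (D) maximal monotone operator exists.

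Next, let $X$ be an arbitrary real Banach space containing an isometric copy of $c_0$. I would apply Theorem~\ref{teo:nond} with the space called ``$X$'' in the statement of that theorem taken to be $c_0$, and the space called ``$\Omega$'' taken to be the given space $X$. Both hypotheses of Theorem~\ref{teo:nond} are then met: $c_0$ carries a non-type (D) maximal monotone operator by Proposition~\ref{pro:nond}, and $X$ contains an isometric copy of $c_0$ by assumption. The conclusion of the theorem is precisely that there exists a non-type (D) maximal monotone operator $S:X\tos X^*$, which is the assertion of the Corollary.

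I do not expect any obstacle here: all the substantive work has already been carried out, in Proposition~\ref{pro:nond} (the construction and verification of the $c_0$ example from Gossez's operator $G$) and in Theorem~\ref{teo:nond} (transporting the non-type (D) property along an isometric embedding, using Hahn--Banach extensions of functionals and the Fitzpatrick-function criterion of \cite{BSMMA-TypeD}). The only point I would flag for care is that the identification $(c_0)^*=\ell^1$ used in Proposition~\ref{pro:nond} is the canonical isometric one, so that the operator of that proposition is genuinely of the form $T:c_0\tos(c_0)^*$ needed to invoke Theorem~\ref{teo:nond}.
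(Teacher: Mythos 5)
Your proposal is correct and coincides with the paper's own (one-line) justification: the Corollary is obtained exactly by combining Proposition~\ref{pro:nond} (the non-type (D) operator on $c_0$) with Theorem~\ref{teo:nond} applied to $c_0$ as the subspace and the given space as $\Omega$. Your remark about the canonical isometric identification $(c_0)^*=\ell^1$ is a sensible point of care but raises no actual issue.
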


\end{document}